\documentclass{amsart}

\newcommand{\comments}[1]{}

\usepackage{amssymb,amsmath}

\newcommand{\Zp}{\mathbb{Z}_p}

\theoremstyle{plain}
\newtheorem{theorem}{Theorem}[section]
\newtheorem{corollary}{Corollary}[section]

\newtheorem{lemma}{Lemma}[section]

\theoremstyle{definition}

\theoremstyle{remark}

\numberwithin{equation}{section}

\title[Multiplicative subgroups that are GAP's]{Multiplicative Subgroups of $\Zp^*$ that are Generalized Arithmetic Progressions}
\comments{}
\author{Albert Cochrane}
\subjclass[2020]{Primary 11B30; Secondary 11B13, 11B25}
\keywords{Sumsets, Progressions, Additive Decompositions, Finite Fields}

\email{albertc@ksu.edu}

\begin{document}

\begin{abstract}
We prove that a multiplicative subgroup $A_k$ of $\Zp^*$ is a generalized arithmetic progression if and only if $|A_k| = 2,\ 4,$ or $p-1$.  Much of the argument is built upon recent work studying additive decompositions of subgroups of $\Zp^*$, and we generalize a result of Hanson and Petridis to show that any additive $n$-decomposition of a subgroup must be a direct sum. 
\end{abstract}

\maketitle

\section{Introduction}
Our primary goal in this paper is to completely answer the question of when multiplicative subgroups can have the additive structure of a generalized arithmetic progression (GAP) in the finite field of $p$ elements. Let $p$ be a prime number, $\Zp$ be the finite field of integers modulo $p$, and $\Zp^*$ denote its multiplicative group of units. For a divisor $k$ of $p-1$ and $t = \frac{p-1}{k}$, we denote the multiplicative subgroup of $\Zp^*$ with order $t$ as $A_k = \{x^k : x \in \Zp^* \}$.  A GAP of dimension $n$ in $\mathbb Z_p$ is a set of the form
\[ 
S=\{a+x_1 d_1 + \dots + x_n d_n : x_i \in [0,L_i-1], 1 \leq i \leq n\},
\]
where $d_i \in \mathbb Z_p^*$ and $L_i \geq 2$ for all $1 \leq i \leq n$. We say that $S$ is a \textit{proper} generalized arithmetic progression if $|S| = \prod_{i=1}^n L_i$.  

For $n=1$, Chowla, Mann, and Straus \cite{cho} proved that a subgroup $A_k$ is an arithmetic progression if and only if $t = 1, 2$ or $p-1$.  Moving on to the case of $n = 2$, it is easy to see that any subgroup of order $4$ is a proper two dimensional GAP. Let $p$ be a prime congruent to $1 \pmod{4}$ and set  $k = \frac{p-1}{4}$. Then for a generator $\omega$, the subgroup $A_k = \{ \pm 1, \pm \omega \}$, which can be written as:
\[
A_k = \{-1 + x_1(1-\omega) + x_2(1+\omega): x_1, x_2 \in [0,1] \}.
\]
We claim that over all prime numbers $p$, there are no other cases where a subgroup of $\Zp^*$ can be a GAP.

\begin{theorem}
Let $A_k$ be a multiplicative subgroup of $\Zp^*$ with $|A_k| = t$.  Then, $A_k$ is a generalized arithmetic progression if and only if $t = 2,4,$ or $p-1$. 
\end{theorem}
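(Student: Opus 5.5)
The easy direction is handled first. If $t=p-1$, then $A_k=\Zp^*=\{1+x\cdot 1 : x\in[0,p-2]\}$. If $t=2$, then $A_k=\{-1,1\}=\{-1+2x : x\in[0,1]\}$. The case $t=4$ is the explicit two–dimensional representation given above. From now on assume $A_k$ is a GAP $S=\{a+\sum x_i d_i\}$ with $t\notin\{1,2,4,p-1\}$; we seek a contradiction. The case $t=1$, i.e.\ $A_k=\{1\}$, is excluded by the convention $L_i\ge 2$: a single point is not a GAP unless we allow $n=0$. We will remark on this convention.

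The key reduction turns a GAP into an additive decomposition. Write
\[
S=\{a\}+P_1+\dots+P_n,\qquad P_i=\{0,d_i,\dots,(L_i-1)d_i\}.
\]
Set $B_1=\{a\}+P_1$ and $B_i=P_i$ for $i\ge 2$. Then $A_k=B_1+\dots+B_n$ with each $|B_i|\ge 2$, so this is an additive $n$-decomposition of $A_k$. By the paper's generalization of Hanson--Petridis announced in the abstract, any such decomposition is a direct sum. Hence $S$ is proper and $t=\prod L_i$. The same result applies to any coarser grouping of the summands, which will be useful below. The case $n=1$ is exactly Chowla--Mann--Straus, which gives $t\in\{1,2,p-1\}$, so we may assume $n\ge 2$.

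Next we exploit multiplicative invariance. Since $A_k=gA_k$ for every $g\in A_k$, the set $gS$ is again a proper GAP with differences $gd_i$ and lengths $L_i$. A natural invariant is the sumset structure: $A_k+A_k$ is a proper-type GAP with lengths $2L_i-1$, so
\[
|A_k+A_k|\le \prod_i(2L_i-1)<2^n t,
\]
and similarly $|hA_k|\le \prod_i(hL_i-h+1)$. On the other hand, known lower bounds for sumsets of multiplicative subgroups are available: $|A_k+A_k|\gg t^{3/2}$ roughly for $t<p^{2/3}$, and results of Shkredov type. These conflict with the GAP bound $|A_k+A_k|\le 3^{n}t/2^{n}\cdot(\dots)$ once $n$ is small relative to $t$. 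Because $t=\prod L_i\ge 2^n$, however, the bound $\prod(2L_i-1)$ can still be as large as about $t^{\log_2 3}$. So this approach alone will not finish the argument, and a structural approach is needed.

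The structural step, which I expect to be the main obstacle, groups the decomposition into two parts: $A_k=B+C$ with $B=\{a\}+P_1$ an arithmetic progression and $C$ the remaining GAP, a direct sum. I would then use the Hanson--Petridis/Shkredov bound $|B||C|=t$ together with their constraint for decompositions of multiplicative subgroups. This works through the fact that $A_k$ is invariant under multiplication by $A_k$, and it yields a condition such as $|B|\cdot|C|\ge t+\ldots$ or that $B-B$ meets $A_k$-orbits in a controlled way. Concretely, the differences $d_1\in B-B$ and $gd_1\in gB-gB$ must both lie in the difference-structure of $S$. Since the set of "directions" $\{\pm d_i\}$ of a proper GAP that equals $A_k$ should be determined by $A_k$ up to finite ambiguity, for example as the elements of $A_k-A_k$ of maximal representation count, the group $A_k$ must act on $\{\pm d_1,\dots,\pm d_n\}$ (with multiplicities $L_i$). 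This forces $t\mid 2n$ roughly. Combined with $t=\prod L_i\ge 2^n$, this gives $2^n\le 2n$, hence $n\le 2$, $L_i=2$, and $t=4$ (or $t=2$). The crux is justifying rigorously that the direction set is intrinsic to $A_k$. I would try to prove this by counting representations $x-y=d$ for $x,y\in A_k$: in a proper GAP, the differences $\pm d_i$ have representation count $t(L_i-1)/L_i\ge t/2$, while any difference not of this form has strictly smaller count. The edge cases where counts tie would then be handled by hand.
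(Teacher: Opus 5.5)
Your reduction to a proper GAP with $t=\prod L_i\ge 2^n$ is fine, as are the easy direction and the $n=1$ case. The structural step meant to finish the proof fails, however. You argue that the directions $\pm d_i$ are intrinsic to $A_k$, i.e.\ that every other difference has strictly smaller representation count. That argument uses only that $S$ is a proper GAP, and for proper GAPs in $\Zp$ the claim is false.

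Already in $[0,7]=\{0,1\}+\{0,2\}+\{0,4\}$ (with $p$ large), the non-direction $3$ has $5$ representations. That exceeds both $t/2=4$ and the count of the direction $4$. Also, $t(L_i-1)/L_i$ is only a lower bound for the count of $d_i$, because other formal differences can collide with it.

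Worse, take $p=17$. The set $\mathbb{Z}_{17}^*=1+\{0,1\}+\{0,2\}+\{0,4\}+\{0,8\}$ is a proper GAP with all $L_i=2$, and its difference counts are invariant under a multiplicative group of order $16$. Every nonzero $d$ has exactly $15$ representations $x-y=d$. So the maximal-count set is all of $\mathbb{Z}_{17}^*$, not a subset of $\{\pm1,\pm2,\pm4,\pm8\}$. Run verbatim, your argument would give $16\le 2\cdot 4$ and ``prove'' that $\mathbb{Z}_{17}^*$ is not a GAP. So you are missing an input that genuinely uses $k>1$, and ``handling ties by hand'' cannot supply it.

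Your observation that some nonzero $d$ has at least $t/2$ representations is the right starting point. It is the difference analogue of the paper's Lemma 5.2: after reducing to $L_i=2$, the element $c=-d_n\neq 0$ satisfies $r(c)\ge t/2$. The contradiction must then come from an \emph{upper} bound on such representation counts in $A_k$, and that bound is arithmetic. The paper uses:
\begin{itemize}
\item for $k=2$, Chen--Yan's $|U|\ge 5$;
\item for $k>2$ and $t\ge 64$, the Hasse--Weil bound for $x^k+y^k=cz^k$, together with Mattarei's $r(c)\le 3\cdot 2^{-2/3}t^{2/3}$ when $t\le k^3/4$;
\item for $t\in\{8,16,32\}$, a direct computation when $p<3^t$, and the exact doubling $|A_k+A_k|/|A_k|=t/2+1/t$ when $p>3^t$.
\end{itemize}
In that last case the exact doubling is compared with $|S+S|/|S|\le (3/2)^n$. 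This is the sumset idea you abandoned; it works there precisely because the doubling constant is known exactly.
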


\section{Definitions and Notation}

We use several standard definitions from additive combinatorics.  Letting $A,B \subseteq \Zp$, we define the sumset $A+B = \{a + b : a \in A, b \in B\}$. For a positive integer $l \geq 2$, we denote the $l$-fold sumset as $lA = A + \dots + A = \{a_1 + \dots + a_l : a_i \in A \}$.  We note here that GAP's of dimension $1$ will simply be referred to as arithmetic progressions, with an occasional reminder that we are speaking of the $n=1$ case depending on context. The definition of GAP in the introduction follows \cite{ta} and other papers on the topic, for example \cite{cr}.  We equivalently use the shorthand  notation found in \cite{zh}, presenting the GAP structure as a sumset of arithmetic progressions:
\[
S =a +d_1[0,L_1-1]+ \dots + d_n[0, L_n -1].
\] This notation is particularly useful when considering GAP's as a special type of additive decomposition, and we may informally refer to $L_i$ as being the size of the $i$th component of the progression.

A subset $B$ of $\Zp$ admits a non-trivial additive decomposition if $B = S_1 + S_2$ for some $S_1,S_2 \subseteq \Zp$ with $|S_1|, |S_2| \geq 2$. More generally, we say $B$ admits an $n$-decomposition if there exist $S_i \subseteq \Zp$ with $|S_i| \geq 2$ such that
\[
B=S_1+\cdots +S_n.
\] Any $n$-decomposition with $n>2$ can also be expressed as a $2$-decomposition, $B = S_1+(S_2+ \cdots +S_n)$.  Following Shkredov \cite{shk2}, we say that a decomposition $B=S_1+\cdots +S_n$ is a direct sum if $|B| = \prod_{i=1}^n|S_i|$. Notice that for both GAP's and $n$-decompositions, we include the standard conditions for non-triviality in the definition statement. That is to say, in this paper we are only interested in the cases where each $L_i \geq 2$ for GAP's and $|S_i| \geq 2$ for decompositions.

\section{Background}

The question of whether a multiplicative subgroup can be a generalized arithmetic progression directly points at the sum-product phenomenon in $\Zp$. Indeed, the results in this paper illustrate that in nearly all cases the additive structure of progressions is incompatible with the multiplicative structure found in subgroups of $k$th powers. 

A closely related question which has received much attention in recent years is whether a subgroup can be expressed as a sumset, that is, admit an additive decomposition. S\'ark\"ozy \cite{sa} conjectured that for $p$ sufficiently large, the subgroup of quadratic residues $R$ cannot be expressed as a non-trivial $2$-decomposition. Shortly after, Shkredov \cite{shk1} proved that for $p > 3$, no set added to itself could be $R$, and Shparlinski \cite{shp1} obtained bounds on the cardinalities of summands for decompositions of arbitrary subgroups of $k$th powers. More recently, Yip \cite{yi}, and  Wu, Wei and Li \cite{wu} have made progress on the question of when $A_k$ can admit a $3$-decomposition.

Specifically relevant to this paper, Chen and Yan \cite{che} refined bounds on decompositions of $R$, proving that any additive decomposition of the form $R = U + V$ must satisfy $|V| \geq |U| \geq 5$, a result we use in Section 5. Hanson and Petridis \cite{ha} showed that any $2$-decomposition of a multiplicative subgroup must be a direct sum. This result in particular proved instrumental in allowing us to consider generalized progressions of arbitrarily large dimension. In the next section we extend this result to cover $n$-decompositions with $n >2$, and ultimately are able to restrict our attention to proper GAP's such that $L_i = 2$ uniformly.

\section{GAP's and Additive Decompositions}

By definition, any GAP admits an additive decomposition where each component is a ($1$-dimensional) arithmetic progression of size at least 2. A nice feature of arithmetic progressions is that they can always be written as a decomposition of sets with size exactly $2$. Suppose $S$ is an arithmetic progression with $L$ terms. Then for some $a \in \Zp$ and $d \in \Zp^*$, we can express $S$ as the sum of $\{a, a+d\}$ and $\{0,d \}$ added to itself $L-2$ times:
\[
S = \{a, a + d, \dots , a + (L-1) d\ \} = \{a, a+ d\} + (L-2)\{0,\ d\}.
\]

Plainly, any sumset of additive decompositions will itself be a decomposition, so this proves the following lemma. For now we use $m$ instead of $n$ to avoid conflating dimension with the number of components in the decomposition.

\begin{lemma} Suppose $S \subseteq \Zp$ is a generalized arithmetic progression. Then, for some positive integer $m$, there exist subsets $S_i \subseteq \Zp$ with $|S_i| = 2$ for $1 \leq i \leq m$ such that $S_1 + \dots + S_m = S$.
\end{lemma}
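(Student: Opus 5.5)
The plan is to decompose the GAP one coordinate at a time and then concatenate the resulting two-element decompositions. Write $S$ in the shorthand notation
\[
S = a + d_1[0,L_1-1] + \dots + d_n[0,L_n-1],
\]
with $d_i \in \Zp^*$ and $L_i \geq 2$. The first step is to check that each one-dimensional piece $P_i = d_i[0,L_i-1]$ is an arithmetic progression with $L_i \geq 2$ terms. By the identity displayed just before the lemma, applied with starting point $0$ and common difference $d_i$, we get
\[
P_i = \{0, d_i\} + (L_i - 2)\{0, d_i\} = (L_i-1)\{0,d_i\}.
\]
So $P_i$ is a sum of exactly $L_i - 1 \geq 1$ sets, each of the form $\{0,d_i\}$. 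Since $d_i \neq 0$ in $\Zp$, each such set has size exactly $2$.

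The second step is to absorb the translate $a$. We replace the first copy of $\{0,d_1\}$ by $\{a, a+d_1\}$, which still has size $2$. This uses the fact that for any sets $X,Y \subseteq \Zp$ we have $(a+X)+Y = a+(X+Y)$.

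The third step is to concatenate. Sumsets in $\Zp$ are associative and commutative, so
\[
S = \{a,a+d_1\} + (L_1-2)\{0,d_1\} + \sum_{i=2}^n (L_i-1)\{0,d_i\}.
\]
This expresses $S$ as a sum of $m = \sum_{i=1}^n (L_i - 1)$ two-element sets $S_1,\dots,S_m$, where $m \geq n \geq 1$.

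There is no real obstacle here. The only points needing care are degenerate cases. When $L_i = 2$, the term $(L_i-2)\{0,d_i\}$ is empty and should be read as contributing nothing (equivalently as the set $\{0\}$). Throughout, we rely on $d_i \in \Zp^*$ to ensure each summand genuinely has two elements. The argument makes no claim that the decomposition is a direct sum, and the lemma does not need one.
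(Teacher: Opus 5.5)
Your proposal is correct and is essentially the paper's argument. You break each one-dimensional component into two-element sets $\{0,d_i\}$ via the identity $\{a,a+d\}+(L-2)\{0,d\}$, absorb the translate $a$ into one of them, and add the pieces together, giving $m=\sum_{i=1}^n (L_i-1)$ summands.
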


\noindent That is to say, any GAP admits an additive decomposition where each component is of size exactly 2. 

We now shift our attention to the condition of a decomposition being a direct sum. Hanson and Petridis \cite{ha} proved the following for $2$-decompositions.

\begin{lemma}[\cite{ha}; Corollary 1.3]
Let $p$ be a prime and $A_k$ be the multiplicative subgroup of $k$th powers for some $k | (p-1)$ with $k \neq 1$. Suppose $S ,\ T \subseteq \Zp$ satisfy $S + T = A_k$. Then, $|S| |T| = |A_k| $. 
\end{lemma}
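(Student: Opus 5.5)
The statement is Corollary 1.3 of Hanson and Petridis. I would reconstruct their argument, which uses Stepanov's polynomial method, rather than treat the lemma as a black box. Write $t=|A_k|=(p-1)/k$ and suppose $S+T=A_k$ with $|S|,|T|\ge 2$ (if one set is a singleton the claim is trivial). Since $|S+T|\ge\max(|S|,|T|)$ and $|S+T|\le |S||T|$, the content is the lower bound $|S||T|\le t$. The key structural fact is that every $x\in A_k$ satisfies $x^{t}=1$. Hence for each fixed $s\in S$, the polynomial $(X+s)^{t}-1$ vanishes on all of $T$. In other words, $T$ lies in the intersection of the translates $A_k-s$, $s\in S$.

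The plan is a Stepanov-type auxiliary polynomial. I would construct
\[
F(X)=\sum_{i,j} c_{ij}\, X^{i}\,(X+s_0)^{jt}
\]
or, more symmetrically, a combination of $(X+s)^{t}$-type terms. It should be nonzero, of controlled degree, and vanish to order at least $|S|$ at every point of $T$. The vanishing is arranged through the relations $(x+s)^t=1$ for all $s\in S$, $x\in T$, together with Hasse derivatives. Counting roots with multiplicity then gives
\[
|S|\,|T|\le \deg F\approx t+\text{(lower order)}.
\]
Hanson and Petridis obtain the clean inequality $|S||T|\le t+|S\cap T|$-type bounds. After translating, one arranges that the error term disappears, for instance because $0\notin A_k$ forces $-S\cap T=\emptyset$. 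The result is $|S||T|\le t$.

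The steps, in order, are these:
\begin{enumerate}
\item Use $x^t=1$ on $A_k$ to encode $S+T\subseteq A_k$ polynomially.
\item Choose the coefficients $c_{ij}$ so that derivatives up to order $|S|-1$ vanish on $T$. This is a linear system with more unknowns than equations once the degrees are set to $|S|$ and $|T|$.
\item Show $F\not\equiv 0$ by a Wronskian-type independence argument for the functions $(X+s)^{t}$, which requires $|S|<p$ and the hypothesis $k\ne 1$, i.e.\ $t<p-1$.
\item Compare the number of roots counted with multiplicity against $\deg F$.
\item Combine the result with the trivial upper bound $|A_k|\le|S||T|$ to get equality.
\end{enumerate}

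The main obstacle is step 3, the non-vanishing of the auxiliary polynomial. In Stepanov's method this is always the delicate part. It is where the restriction $k\neq1$ enters: for $A_1=\Zp^*$ the decomposition $\Zp^*=\{0,1\}+[1,p-2]$ is not direct, so the argument must fail there. I would try to prove the non-vanishing by showing that the matrix of Hasse derivatives of $(X+s)^{t}$ at a generic point is a Vandermonde-like matrix in the $s\in S$, which is nonsingular because the elements of $S$ are distinct and the relevant binomial coefficients $\binom{t}{r}$ are nonzero mod $p$ for $r<|S|\le t<p$.
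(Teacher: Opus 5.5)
The paper gives no proof of this lemma; it imports Corollary 1.3 of Hanson and Petridis, so simply quoting that result would match the paper. Your reconstruction, however, has a genuine gap at the decisive point.

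\textbf{The sharp bound is not produced.} The conclusion is an equality, so you need the error-free bound $|S||T|\le t$. A root count against $\deg F\approx t+(\text{lower order})$ proves nothing. Neither does the constant-factor loss typical of a Stepanov dimension count (\emph{more unknowns than equations}). Worse, if the vanishing in your step 2 is imposed pointwise on $T$, it costs $|S||T|$ linear conditions and forces $\deg F\ge |S||T|$, which is circular.

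Your concrete candidate $\sum c_{ij}X^i(X+s_0)^{jt}$ uses a single shift. On $T$ it therefore only sees $T\subseteq A_k-s_0$, which is compatible with $|T|=t$ and says nothing about $|S|$. The \emph{symmetric} alternative is never specified, and no bound $\deg F\le t$ is established.

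The missing idea is the explicit polynomial
\[
F(X)=\sum_{s\in S}c_s\,(X+s)^{t+|S|-1},\qquad \sum_{s\in S}c_s s^{j}=0\ \ (0\le j\le |S|-2),\qquad \sum_{s\in S}c_s s^{|S|-1}=1,
\]
where the $c_s$ come from inverting the Vandermonde matrix of $S$. These conditions kill the top $|S|-1$ coefficients, so $\deg F\le t$. For $x\in T$ and $0\le r\le |S|-1$, the relation $(x+s)^t=1$ gives
\[
F^{(r)}(x)=\frac{(t+|S|-1)!}{(t+|S|-1-r)!}\sum_{s\in S}c_s(x+s)^{|S|-1-r},
\]
which equals $0$ for $r\ge 1$ and $1$ for $r=0$. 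Hence $F-1$ vanishes to order $|S|$ at every point of $T$, with no linear algebra indexed by $T$. Counting roots then gives $|S||T|\le t$.

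\textbf{The non-vanishing step is misplaced.} The coefficients $\binom{t}{r}$ you invoke are nonzero mod $p$ even when $t=p-1$, since they are $\equiv(-1)^r$. So your step 3 never actually uses $k\neq 1$. It would apply verbatim to $\Zp^*$, where, as you yourself observe, the statement is false.

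In the correct argument non-vanishing is immediate. The $X^t$-coefficient of $F-1$ is $\binom{t+|S|-1}{|S|-1}$. This is nonzero mod $p$ because $|S|\le t\le (p-1)/2$ forces $t+|S|-1<p$. That is exactly where $k\neq 1$ enters: for $k=1$ and $|S|\ge 2$ one checks that $F\equiv 1$. The same bound $|S|\le t<p$ is also what lets you read multiplicities off ordinary derivatives.
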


We also need the following theorem of Gyarmati, Matolsci, and Rusza \cite{gy}.

\begin{theorem}[\cite{gy}; Theorem 1.2]
Let $n \ge 3$ be a positive integer and $S_1,\dots, S_n$  be nonempty subsets of $\mathbb Z_p$. Let $S = S_1 + \dots + S_n$, and for $1 \le i \le n$, set
\[
\hat S_i=S_1+\cdots+S_{i-1}+S_{i+1}+\cdots +S_n.
\]
Then we have
\[
|S| \le \Big(\prod_{i=1}^n |\hat S_i|\Big)^{\frac 1{n-1}}.
\]
\end{theorem}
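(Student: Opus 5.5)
The plan is to reduce the statement to a purely combinatorial projection inequality on a product set, by choosing for each element of $S$ one canonical representation as a sum, and then applying the Loomis--Whitney inequality for finite sets (the combinatorial form of Han's inequality).

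\emph{Canonical representations.} Identify $\mathbb Z_p$ with $\{0,1,\dots,p-1\}$, giving a total order. Order $n$-tuples in $S_1\times\cdots\times S_n$ lexicographically. For each $x\in S$, let $\tau(x)=(s_1,\dots,s_n)$ be the lexicographically smallest tuple with $s_1+\cdots+s_n=x$. Put $T=\{\tau(x):x\in S\}$. Since $\tau$ is injective, $|T|=|S|$.

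For $1\le i\le n$, let $\pi_i:T\to \prod_{j\ne i}S_j$ delete the $i$th coordinate. Let $\sigma_i(\pi_i(t))$ be the sum of the remaining $n-1$ coordinates, which lies in $\hat S_i$.

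\emph{Key step: $\sigma_i$ is injective on $\pi_i(T)$.} Suppose $t=(\dots,a,\dots)$ and $t'=(\dots,a',\dots)$ lie in $T$, with $i$th coordinates $a,a'$ and deleted parts $r=\pi_i(t)$, $r'=\pi_i(t')$, and suppose $\sigma_i(r)=\sigma_i(r')$.
\begin{itemize}
\item The tuple obtained from $t$ by replacing $r$ with $r'$ (keeping $a$ in slot $i$) lies in $S_1\times\cdots\times S_n$ and has the same sum as $t$.
\item This tuple and $t$ agree in coordinate $i$. Hence their lexicographic comparison is decided by the lexicographic comparison of $r$ and $r'$ in the induced order on the remaining coordinates.
\item Minimality of $t=\tau(x)$ therefore forces $r\le r'$. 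The symmetric argument with $t'$ gives $r'\le r$, so $r=r'$.
\end{itemize}
Consequently $|\pi_i(T)|\le|\hat S_i|$ for every $i$. This is the step needing the most care: the lex order must be chosen so that comparisons between tuples agreeing in slot $i$ reduce to comparisons of the projections. The plain lexicographic order does this for every $i$ simultaneously.

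\emph{Conclusion.} The Loomis--Whitney inequality for a finite subset $T$ of a product of $n$ sets gives
\[
|T|^{n-1}\le \prod_{i=1}^n|\pi_i(T)|.
\]
It can be proved by induction on $n$ via Hölder's inequality, or via Shearer's lemma: take the entropy of a uniform random element of $T$, where each coordinate is covered exactly $n-1$ times by the sets $[n]\setminus\{i\}$.

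Combining, $|S|^{n-1}=|T|^{n-1}\le\prod_i|\hat S_i|$. Taking $(n-1)$th roots proves the theorem. I expect the main obstacle to be the injectivity step above. The Loomis--Whitney part is standard, and if needed I would include the short entropy proof via Shearer's lemma.
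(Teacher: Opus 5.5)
Your proof is correct and complete. The lexicographically minimal representatives give $|T|=|S|$. Your swapping argument correctly shows that the partial-sum map is injective on each projection $\pi_i(T)$, because tuples agreeing in slot $i$ compare lexicographically exactly as their projections do. Hence $|\pi_i(T)|\le|\hat S_i|$, and the discrete Loomis--Whitney (Han/Shearer) inequality finishes the argument.

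The paper itself gives no proof of this statement; it simply imports it from Gyarmati, Matolcsi and Ruzsa \cite{gy}. Your argument is the standard proof of their inequality and supplies what the paper omits. It uses nothing about $\Zp$ beyond commutativity and a total order on the summands, so it holds in any abelian group.
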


Applying this to Lemma 4.2, we obtain the following generalization. 

\begin{corollary} Let $k$ be a divisor of $p-1$, $k \neq 1$, and $A_k$ be the group of $k$th powers in $\Zp^*$. If $A_k=S_1+\cdots +S_n$ for some nonempty subsets $S_i$ of $\mathbb Z_p$, then $|A_k|=\prod_{i=1}^n|S_i|$.
\end{corollary}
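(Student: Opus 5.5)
We prove the statement by induction on $n$. The base case $n=2$ is exactly Lemma 4.2. The case $n=1$ is trivial, since then $S_1=A_k$. For the inductive step, assume $n\ge 3$ and that the statement holds for all decompositions of $A_k$ with fewer than $n$ summands.

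First, we apply the inductive hypothesis to the grouped sums. For each $i$ we may regroup
\[
A_k = S_i + \hat S_i ,
\]
which is a $2$-decomposition. Lemma 4.2 then gives $|A_k| = |S_i|\,|\hat S_i|$ for every $1\le i\le n$. (If some $|S_i|=1$, then $S_i$ is a translate by a single element, and Lemma 4.2 still applies as stated for nonempty $S,T$.) Next, $\hat S_i$ is itself a sum of the $n-1$ sets $S_j$ with $j\neq i$. The same structure is inherited one level down, but we do not even need that, because the plain bound suffices:
\[
|\hat S_i| \le \prod_{j\ne i}|S_j|.
\]
Combining the two gives $|A_k| \le \prod_{j=1}^n |S_j|$.

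For the reverse inequality we use Theorem 4.3. It gives
\[
|A_k|^{\,n-1} \le \prod_{i=1}^n |\hat S_i| = \prod_{i=1}^n \frac{|A_k|}{|S_i|} = \frac{|A_k|^{n}}{\prod_{i=1}^n |S_i|},
\]
where the middle equality uses $|\hat S_i| = |A_k|/|S_i|$ from the first step. Rearranging yields $\prod_i |S_i| \le |A_k|$. Together with the upper bound this gives equality, which proves the statement.

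Note that the induction is in fact unnecessary: only Lemma 4.2 applied to each regrouping, together with Theorem 4.3, is used. The only point needing care is checking that Lemma 4.2 applies to each regrouping $S_i+\hat S_i$, which holds because it places no size condition on the summands. There is no real obstacle beyond this bookkeeping.
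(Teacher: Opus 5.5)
Your proposal is correct and is essentially the paper's proof. You apply Lemma 4.2 to each regrouping $A_k=S_i+\hat S_i$ to get $|\hat S_i|=|A_k|/|S_i|$, substitute this into the Gyarmati--Matolcsi--Ruzsa inequality (numbered Theorem 4.1 in the paper) to obtain $\prod_i|S_i|\le|A_k|$, and note that the reverse inequality is trivial; as you observe yourself, the induction framing is unnecessary.
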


\begin{proof} Lemma 4.2 proves this for $n=2$.  Suppose now that $n \ge 3$ and that $A_k=S_1+\cdots +S_n = S_i+\hat S_i$.  Applying Lemma 4.2 to the decomposition
$
A_k=S_i+\hat S_i,
$
we obtain $|A_k|=|S_i||\hat S_i|$ for $1 \le i \le n$. Then by Theorem 4.1 we have
\[
|A_k|^{n-1} \le \prod_{i=1}^n |\hat S_i|= \prod_{i=1}^n \frac {|A_k|}{|S_i|} = \frac {|A_k|^n}{\prod_{i=1}^n |S_i|}.
\]
This yields $\prod_{i=1}^n |S_i| \leq |A_k| $, and it is trivial that $|A_k| \le \prod_{i=1}^n |S_i|$. 
\end{proof}

Combining this with Lemma 4.1, we arrive at the following corollary, which sets up the proof of Theorem 1.1. 

\begin{corollary}
Suppose $A_k$ is a generalized arithmetic progression and $k > 1$. Then, $|A_k| = 2^n$ for some positive integer $n$, and $A_k$ is a proper generalized arithmetic progression of dimension $n$ with $L_i= 2$ uniformly.
\end{corollary}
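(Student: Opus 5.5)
We combine Lemma 4.1 with Corollary 4.1. Suppose $A_k = a + d_1[0,L_1-1] + \dots + d_n[0,L_n-1]$ is a GAP of dimension $n$ with each $L_i \ge 2$, and $k>1$. First, apply Corollary 4.1 directly to the $n$-decomposition whose summands are $S_1 = a + d_1[0,L_1-1]$ and $S_i = d_i[0,L_i-1]$ for $i \ge 2$. These have sizes $|S_i| = L_i$, since $d_i \ne 0$ and $L_i \le p$. The corollary then gives $|A_k| = \prod_{i=1}^n L_i$, so the progression is proper.

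Second, refine each component as in the proof of Lemma 4.1. Write $a + d_1[0,L_1-1] = \{a, a+d_1\} + (L_1-2)\{0,d_1\}$ and $d_i[0,L_i-1] = \{0,d_i\} + (L_i-2)\{0,d_i\}$ for $i\ge 2$. This expresses $A_k$ as a sum of exactly $m = \sum_{i=1}^n (L_i - 1)$ sets, each of size $2$. Applying Corollary 4.1 to this $m$-decomposition gives $|A_k| = 2^m$.

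Comparing the two counts gives $\prod_{i=1}^n L_i = 2^{\sum_i (L_i-1)}$, that is, $\prod_i L_i/2^{L_i-1} = 1$. For every integer $L \ge 2$ we have $L \le 2^{L-1}$, with equality exactly when $L = 2$. The case $L=2$ is trivial, and the inequality follows for all $L \ge 2$ by induction, since $L+1 \le 2L \le 2^L$, with the first inequality strict for $L \ge 2$. Hence each factor $L_i/2^{L_i-1}$ is at most $1$, and the product equals $1$ only if $L_i = 2$ for every $i$. Then $m = n$ and $|A_k| = 2^n$, and $A_k$ is a proper GAP of dimension $n$ with $L_i = 2$ throughout, which is the claim.

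The only delicate point is making sure every summand really has the size we assign it, since Corollary 4.1 counts the summand sizes and not the number of terms. A progression $d[0,L-1]$ has exactly $L$ elements as long as $L \le p$. This holds here: if some $L_i > p$, then $\prod_i L_i > p > |A_k|$, contradicting the first step, or more simply the summand would be all of $\Zp$ and then so would $A_k$, which is impossible. The size-two sets $\{a,a+d_i\}$ and $\{0,d_i\}$ have exactly two elements because $d_i \ne 0$. The hypothesis $k>1$ is needed throughout, since Corollary 4.1 requires it.
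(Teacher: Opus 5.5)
Your proof is correct, and it reaches a slightly stronger conclusion than the paper's. The paper uses only your second application of Corollary 4.1. It refines the GAP into $m$ two-element summands via Lemma 4.1 and gets $|A_k|=2^m$. It then observes that any sum of two-element sets $\{b_i,b_i+e_i\}$ is itself the GAP $\sum_i b_i+e_1[0,1]+\cdots+e_m[0,1]$, which is proper because it has $2^m$ elements. So the paper re-represents $A_k$ as a new GAP whose dimension is the number of two-element pieces, and the $n$ in the statement quietly shifts from the original dimension to $m$.

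Your extra application of Corollary 4.1 to the original components $d_i[0,L_i-1]$, combined with $\prod_i L_i=2^{\sum_i(L_i-1)}$ and $L\le 2^{L-1}$, shows more. It proves that every GAP representation of $A_k$ (for $k>1$) is already proper with all $L_i=2$, so $n$ can be taken to be the original dimension. This is implicit in the paper's argument too. If some $L_i\ge 3$, the refinement contains two two-element summands with the same difference $d_i$, whose sum has only three elements, and that would contradict directness.

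For Theorem 1.1 the paper's shorter route is enough. Lemma 5.2 and the rest only need $t=2^n$ and the existence of some decomposition of $A_k$ into two-element sets.
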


\begin{proof}
Suppose $A_k$ is a GAP. Lemma 4.1 tells us that for some positive integer $n$, $A_k = S_1 + \dots + S_n$ for subsets $S_i \subseteq \Zp$ with $|S_i| = 2$. It follows from Corollary 4.1 that such a decomposition must be a direct sum; that is, $|A_k|=\prod_{i=1}^n|S_i| = 2^n$. By definition, this means $A_k$ is a proper GAP of dimension $n$. 

\end{proof}

\section{Proof of theorem 1.1}

We begin by considering $k = 2$, where $A_k$ is the group of quadratic residues. 

\begin{lemma}[\cite{che}; Lemma 2.6]
If $k=2$ and $A_k = U+V$ with $|V|\geq |U|\geq 2$, then $|U|\geq 5$.  
\end{lemma}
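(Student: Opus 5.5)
The plan is to combine the direct-sum property of Lemma 4.2 with Weil's bound for character sums, and to treat the three cases $|U|=2,3,4$ separately. Throughout, $\chi$ denotes the Legendre symbol, $R=A_2$ is the set of quadratic residues, and $A_2=U+V$ with $|V|\ge |U|=m\ge 2$. Lemma 4.2 gives $|U||V|=|R|=(p-1)/2$, so $|V|=(p-1)/(2m)$. Every translate $u+V$ with $u\in U$ lies in $R$, so
\[
V\subseteq X_U:=\{x\in\Zp : \chi(x+u)=1 \text{ for all } u\in U\}.
\]

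\textbf{The cases $m=3,4$.} Here a counting argument suffices. I would expand
\[
\prod_{u\in U}\tfrac12\bigl(1+\chi(x+u)\bigr),
\]
which equals $1$ exactly when $x\in X_U$, up to an $O(m)$ correction at the points $x=-u$. Summing over $x$ and applying Weil's bound to each nontrivial product $\chi\bigl(\prod_{u\in T}(x+u)\bigr)$ with $T\ne\emptyset$ (each such polynomial is squarefree, so each sum is at most $(|T|-1)\sqrt p$ in absolute value) should give
\[
|X_U|\le \frac{p}{2^m}+c_m\sqrt p
\]
for an explicit small constant $c_m$. Since $(p-1)/(2m)>p/2^m$ for $m=3,4$ (compare $p/6$ with $p/8$, and $p/8$ with $p/16$), the inclusion $V\subseteq X_U$ fails once $p$ exceeds an explicit bound. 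The finitely many small primes would then be checked directly. This is easy in practice: $|U|=m$ must divide $(p-1)/2$, and $|V|\ge m$ forces $p\ge 2m^2+1$.

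\textbf{The case $m=2$.} The counting bound is useless here, since $p/4$ and $(p-1)/4$ agree to leading order, so I would use a structural argument instead. Write $U=\{a,a+d\}$ and $W=a+V$. Then $R=W\sqcup(W+d)$, and the union is disjoint by the direct-sum property. Dilating by $d^{-1}$ turns $R$ into $d^{-1}R$, which is either $R$ or the set of nonresidues. This set is a disjoint union $W'\sqcup(W'+1)$.

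Now decompose $d^{-1}R$ into maximal runs of consecutive residues $x,x+1,\dots$. Along any run, membership must alternate between $W'$ and $W'+1$. The first element of a run cannot lie in $W'+1$, because its predecessor would then be in $W'\subseteq d^{-1}R$. Symmetrically, the last element cannot lie in $W'$. Hence every run has even length, and in particular no run has length $1$.

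A run of length $1$ is an $x$ with $\chi(x)=\epsilon$ and $\chi(x-1)=\chi(x+1)=-\epsilon$, where $\epsilon=\pm1$ is fixed by $\chi(d)$. Counting such $x$ with the same Weil expansion gives $p/8+O(\sqrt p)$ of them, which is positive for $p$ large. This contradiction settles $m=2$ apart from small primes, which are handled by direct check as before.

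\textbf{Main obstacle.} The main difficulty is the case $m=2$, because no naive density count reaches a contradiction there; the parity-of-runs observation is what makes it work. The remaining effort is making the Weil error terms explicit enough that the leftover small primes form a finite, checkable list.
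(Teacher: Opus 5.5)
Your proof is correct, but it takes a different route from the paper, which gives no argument for this statement and simply quotes it from Chen and Yan. You give a self-contained proof that uses only Lemma 4.2 and Weil's bound, both of which the paper already relies on.

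For $m=3,4$, the large-$p$ step does not need the direct-sum property. The inequality $|U||V|\ge |R|$ already gives $|V|\ge (p-1)/(2m)$, which exceeds the density bound $p/2^m+O(\sqrt p)$ for $X_U$. Lemma 4.2 only serves to prune the list of small primes.

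For $m=2$, the disjointness $W\cap(W+d)=\emptyset$ is essential, and your parity-of-runs argument supplies exactly what the density count cannot. The alternation you assert does hold by disjointness: $x\in W'$ forces $x+1\in W'+1$, and $x\in W'+1$ with $x+1$ in the run forces $x+1\in W'$.

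What the citation buys over your route is that it avoids the residual finite computation, which you leave unquantified. That computation shrinks considerably if you use the exact value $\sum_x\chi((x+u)(x+u'))=-1$ for the quadratic terms instead of Weil's bound:
\begin{itemize}
\item For $m=3$ you get $|X_U|\le (p-3+2\sqrt p)/8<(p-1)/6$ once $p>25$, which leaves only $p=19$. That case can be checked by hand.
\item For $m=4$ you get $|X_U|\le (p-6+11\sqrt p)/16<(p-1)/8$ for $p\ge 113$, which leaves $p\in\{41,73,89,97\}$. These need a machine check.
\item For $m=2$ the number of isolated points of $d^{-1}R$ away from $0,\pm1$ is at least $(\sqrt p-1)^2/8-3/2>0$ for $p\ge 23$. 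This leaves $p=13$ and $p=17$, where isolated residues and nonresidues can be seen directly: $1$ and $2$ for $p=13$, and $4$ and $3$ for $p=17$.
\end{itemize}

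Incidentally, Lemma 4.2 can be dropped even for $m=2$. The exact counts of consecutive residue/nonresidue patterns give $|X_U|\le (p-1)/4$. Together with $2|V|\ge |R|$, this forces $|V|=(p-1)/4$ and hence the direct sum.
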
 
\noindent Then by Corollary 4.2 it follows that for $t>2$, no subgroup of quadratic residues can be expressed as a generalized arithmetic progression. 

For $k > 2$, we consider subgroups in two primary cases, $t < 64$ and $t \geq 64$, or equivalently, $n < 6$ and $n \geq 6$. For the first case, we look at the doubling properties of subgroups. For the second, we employ classical estimates on the number of solutions to the equation $x^k+y^k = c z^k$. Lemma 5.2 below is used in both cases. 

For any $c \in \Zp$, let $r(c)$ denote the representation function which counts the number of ways $c$ can be written as a sum of $2$ elements of $A_k$:
\[
r(c):= \#\{(x,y)\in A_k \times A_k: x+y=c \}.
\]

\begin{lemma}
Suppose that $A_k=S_1+\cdots +S_n$ for subsets $S_i$ each of cardinality 2. Then, $r(c) \geq \frac{t}{2}$ for some $c \neq 0$. 
\end{lemma}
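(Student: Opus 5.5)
Write $S_i=\{a_i,a_i+d_i\}$ with $d_i\neq 0$. By Corollary 4.1 the sum is direct, so every element of $A_k$ has a unique representation $\sum_i (a_i+\epsilon_i d_i)$ with $\epsilon\in\{0,1\}^n$. The plan is to pair up elements of $A_k$ by flipping a single coordinate and to show that all pairs obtained this way share the same sum $c$.

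\textbf{Step 1 (the pairing).} Fix $j=1$ and let $a=\sum_i a_i$. For each $\epsilon$ let $x_\epsilon=a+\sum_i\epsilon_i d_i$, and let $\bar\epsilon$ be the complementary vector, $\bar\epsilon_i=1-\epsilon_i$. Then
\[
x_\epsilon+x_{\bar\epsilon}=2a+\sum_{i=1}^n d_i=:c,
\]
which is independent of $\epsilon$. Since $\epsilon\neq\bar\epsilon$, the ordered pairs $(x_\epsilon,x_{\bar\epsilon})$, one for each of the $2^n=t$ choices of $\epsilon$, are $t$ distinct elements of $A_k\times A_k$ summing to $c$. This gives $r(c)\ge t$, which is stronger than the stated $t/2$; even counting unordered pairs we still get $t/2$.

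\textbf{Step 2 (showing $c\neq 0$).} Note that $c$ is twice the sum of the set divided by $|A_k|$. More precisely, summing over all $\epsilon$ gives
\[
\sum_{x\in A_k}x=\sum_\epsilon x_\epsilon=\frac{t}{2}\,c,
\]
since the $x_\epsilon$ pair off into $t/2$ pairs each summing to $c$. For a subgroup with $t>1$ we have $\sum_{x\in A_k}x=0$, so this relation gives no information: $c$ could vanish. This is the real issue.

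If $c=0$, then $x_{\bar\epsilon}=-x_\epsilon$, so the complement pairing is just $x\mapsto -x$. To handle this, I would use pairings that flip only some coordinates. Take a proper nonempty subset $J\subset\{1,\dots,n\}$ (this needs $n\ge 2$; for $n=1$, $t=2$, there is nothing beyond the case below) and flip only the coordinates in $J$. Restricted to $\epsilon$ with a fixed pattern off $J$, the sum $x_\epsilon+x_{\epsilon'}$ equals a constant
\[
c_{J,\delta}=2\Big(a+\sum_{i\notin J}\delta_i d_i\Big)+\sum_{i\in J}d_i,
\]
where $\delta$ is the fixed off-$J$ pattern. Grouped by $\delta$, this yields only $2^{|J|}$ pairs per value, which is smaller than $t$. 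So instead I would argue directly that $c\neq 0$ can be arranged.

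\textbf{Step 3 (choosing the representation).} The representation is not canonical: swapping $a_i\leftrightarrow a_i+d_i$ replaces $d_i$ by $-d_i$ but leaves $c=2a+\sum d_i$ unchanged, since $2a_i+d_i$ is invariant. So $c=\sum_i(2a_i+d_i)=2\cdot\frac{1}{2}\sum_i(\text{sum of }S_i)$ is intrinsic. Suppose $c=0$, so that $A_k=-A_k$ via the complement map. Then I would apply the argument to a translate of the decomposition: replace $S_1$ by $S_1+s$ and $S_2$ by $S_2-s$. This leaves the decomposition valid but does not change $c$, so that alone does not help.

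The fallback is to look at flipping one coordinate. For $J=\{j\}$ the pairs are $x$ and $x\pm d_j$, whose sums vary, so the only uniform choice is the complement. I would therefore argue that $c=0$ is impossible for $t\ge 4$ as follows. If $c=0$, then $-1\in A_k$ and the map $x\mapsto -x$ reverses every coordinate. Now multiply by a group element $g\ne\pm1$: $gA_k=A_k$ gives a second decomposition $\sum gS_i$ with the same $c$ scaled by $g$, namely $gc=0$, which is consistent. Hence the obstacle stands.

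This is the step I expect to be the main difficulty. The honest resolution is probably to note that $c=0$ must be excluded using additional structure: for instance, use coordinates $J$ with $|J|=n-1$, giving two constants $c_{J,0}$ and $c_{J,1}$, each attained by $t/2$ ordered pairs. When $c=0$ these equal $\pm d_{j_0}$, where $j_0$ is the coordinate not in $J$, and both are nonzero. That would give $r(c')\ge t/2$ with $c'=d_{j_0}\neq0$, which is exactly the bound stated in the lemma. So the plan is: if $c\neq 0$, take $c$; otherwise take $c'=\pm d_{j}$ from flipping all but one coordinate.
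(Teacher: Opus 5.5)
Your final argument (flip every coordinate except $j_0$, which gives $2^{n-1}=t/2$ ordered pairs, distinct by Corollary 4.1, all with common sum $c-d_{j_0}$ or $c+d_{j_0}$) is exactly the paper's proof, and it is correct. The case split is unnecessary, though: your own identity $\sum_{x\in A_k}x=\frac t2\,c$, together with $\sum_{x\in A_k}x=0$ and $p$ odd, forces $c=0$ outright rather than ``giving no information.'' That is precisely how the paper shows in one line that the sum is $-d_n\neq 0$, so you can drop the $c\neq 0$ branch (it never occurs) and the detours in Step 3.
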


\begin{proof} Suppose that $A_k=S_1+\cdots +S_n$ for subsets $S_i$, each of cardinality 2.  By translation we can assume $0 \in S_i$ for the first $n-1$ sets, say $S_i=\{0,d_i\}$ and $S_n=\{a, a+d_n\}$ for some $d_i \in \Zp^*$,  $1 \le i \le n$, and $a \in \mathbb Z_p$.  Let $c=2a+\sum_{i=1}^{n-1}d_i$. We claim $c = -d_n \neq 0$. Indeed,  the sum of the elements in $A_k$ is zero, so
\[
0 = \sum_{x_1=0}^1\cdots \sum_{x_n=0}^1 \big(d_1x_1+\cdots +d_{n}x_{n}+a\big)=
2^{n-1}(2a+d_1+\cdots +d_n)=2^{n-1}\big(c+d_n\big).
\]
For $x=a +\sum_{i=1}^{n-1} x_i d_i$ with $x_i \in \{0,1\}$, $1 \le i \le n-1$, and  $y=a+ \sum_{i=1}^{n-1} (1-x_i)d_i$ we have $x+y=c$. Moreover the pairs $x,y$ are uniquely determined by the choice of the $x_i$ as by Corollary 4.1, the decomposition of $A_k$ must be a direct sum. Thus $r(c) \ge 2^{n-1}=\frac{t}{2}$.
\end{proof}

Now we examine subgroups with $k>2$ and $t<64$. In the introduction, we showed that subgroups of order $2$ and $4$ will always be a GAP, and by Corollary 4.2 we know that no subgroup with $k>1$ and $t \not \in \{2,4, 8,16,32\}$ can be a GAP. We restrict our attention here to $t\in \{8,16,32\}$. For primes $p<3^t$, a direct computation following the resultant method in Section 8.1 of \cite{co} shows that $r(c) < \frac{t}{2}$ holds for all such subgroups and $c \in \Zp^*$. For $p > 3^t$ we have the following lemma: 

\begin{lemma} [\cite{co}; Theorem 2]
For a prime number $p$ and $k|(p-1)$, let $A_k$ denote the multiplicative subgroup of $k$th powers in $\Zp^*$ with order $t$. If $t$ is even and $p > 3^t$, then $\frac{|A_k+A_k|}{|A_k|} = \frac{t}{2} + \frac{1}{t}$.
\end{lemma}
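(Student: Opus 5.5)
The statement is that $|A_k+A_k| = \frac{t^2}{2}+1$. I would reduce it to a rigidity statement about the equation $x+y=u+v$ in $A_k$, and prove that statement by lifting to characteristic zero.

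\textbf{Counting step.} Since $t$ is even, $-1\in A_k$, so $0\in A_k+A_k$. The key claim is the following: for $p>3^t$, if $x,y,u,v\in A_k$ satisfy $x+y=u+v\neq 0$, then $\{x,y\}=\{u,v\}$. Granting this:
\begin{itemize}
\item the $t$ diagonal sums $2x$ are pairwise distinct and nonzero;
\item the $t^2-t-t$ ordered pairs $(x,y)$ with $x\neq y$ and $x+y\neq 0$ produce each of their sums exactly twice;
\item no diagonal sum $2x$ equals an off-diagonal sum $u+v$, by the claim.
\end{itemize}
Hence there are $t+\frac{t^2-2t}{2}=\frac{t^2}{2}$ nonzero sums. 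Adding $0$ gives $|A_k+A_k|=\frac{t^2}{2}+1$, which is the stated ratio $\frac{t}{2}+\frac1t$.

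\textbf{Lifting step.} Divide the relation by $x$, so that it reads $1+a=b+c$ with $a,b,c\in A_k$. Let $\zeta$ be a primitive $t$-th root of unity in $\mathbb{Z}[\zeta]$, and let $P$ be a prime of $\mathbb{Z}[\zeta]$ above $p$. Because $t\mid p-1$, reduction modulo $P$ maps the $t$-th roots of unity bijectively onto $A_k$. Write $a,b,c$ as the reductions of $\zeta^i,\zeta^j,\zeta^k$, and set
\[
\alpha=1+\zeta^i-\zeta^j-\zeta^k\in\mathbb{Z}[\zeta].
\]
Then $\alpha\equiv 0 \pmod P$. If $\alpha\neq 0$, this forces $p\mid N(\alpha)$, where $N$ is the norm from $\mathbb{Q}(\zeta)$ to $\mathbb{Q}$.

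\textbf{Norm bound (main obstacle).} I need $|N(\alpha)|<3^t$. Set $f(X)=1+X^i-X^j-X^k$. Summing $|f(\omega)|^2$ over all $t$-th roots of unity $\omega$ gives $t$ times the sum of the squared coefficients of $f$ reduced modulo $X^t-1$. This is at most $16t$ when exponents coincide and equals $4t$ generically; I would bound it crudely by $16t$.

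The Galois conjugates of $\alpha$ are among these $t$ values. By AM--GM,
\[
|N(\alpha)|^2\le\Big(\frac{16t}{\varphi(t)}\Big)^{\varphi(t)}.
\]
The function $(16t/x)^{x/2}$ is increasing for $x\le 16t/e$, and $\varphi(t)\le t/2$ for even $t$. So
\[
|N(\alpha)|\le 32^{t/4}=2^{5t/4}<3^t<p.
\]
Therefore $\alpha=0$ in $\mathbb{C}$. This is the step where care is needed: a sharper Parseval constant may be required, and the degenerate exponent cases should be handled separately.

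\textbf{Characteristic zero.} It remains to solve $1+\zeta^i=\zeta^j+\zeta^k$ in $\mathbb{C}$, which is a vanishing sum of four roots of unity with signs $+,+,-,-$. Its minimal vanishing subsums have length $2$ or at least $3$. A minimal subsum of length $3$ would force a rotated set of cube roots of unity, and the remaining single term could not vanish. So the sum splits into two pairs. Either $\{\zeta^j,\zeta^k\}=\{1,\zeta^i\}$, giving $\{u,v\}=\{x,y\}$, or $1+\zeta^i=0$ and $\zeta^j+\zeta^k=0$, giving $x+y=0$, which is excluded. This proves the claim and hence the lemma.
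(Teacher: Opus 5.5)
The paper gives no proof of this lemma; it is quoted from its source. The paper's reference to a ``resultant method'' there points to an argument of the same lift-and-bound-a-norm type as yours. Your counting step, the reduction modulo $P$, and the norm estimate are all correct. The hedge in the norm step is unnecessary. Parseval is an identity for every $f$, and $\sum_m c_m^2\le(\sum_m|c_m|)^2\le 16$ holds whatever coincidences occur among the exponents, so your bound $|N(\alpha)|\le 2^{5t/4}<3^t$ already closes that step. It is simpler still to use $|\sigma(\alpha)|\le 4$ for each conjugate, which gives $|N(\alpha)|\le 4^{\varphi(t)}\le 4^{t/2}=2^t$ for even $t$.

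The one genuine gap is in the characteristic-zero step. Partitioning the vanishing sum $1+\zeta^i-\zeta^j-\zeta^k$ into minimal vanishing subsums leaves two possible shapes: $2+2$, or a single minimal vanishing sum of length $4$. (The shape $3+1$ is impossible, as you observe.) You never rule out the second shape, so ``the sum splits into two pairs'' does not follow from what you wrote. The conclusion is true, but it needs an argument, and the cleanest one avoids minimal subsums entirely.

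Put $s=1+\zeta^i=\zeta^j+\zeta^k$. If $s=0$ then $x+y=0$, which is excluded. If $s\neq 0$, then $1,\zeta^i,\zeta^j,\zeta^k$ all lie in $C=\{z\in\mathbb{C}:|z|=1,\ |s-z|=1\}$. This is the intersection of two distinct circles, so $|C|\le 2$ and hence $C=\{1,\zeta^i\}$; when $\zeta^i=1$ the circles are tangent and $C=\{1\}$. Thus $\zeta^j\in\{1,\zeta^i\}$. Since $\zeta^k=s-\zeta^j$ and $\zeta^i=s-1$, this gives $\{\zeta^j,\zeta^k\}=\{1,\zeta^i\}$ as multisets, i.e.\ $\{u,v\}=\{x,y\}$.

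With this patch your argument is a complete proof of the lemma. You should also use a different letter for the exponent you call $k$, since it clashes with the $k$ of $A_k$.
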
 
\noindent It follows that for $t = 2^n$ and $p > 3^t$,  $\frac{|A_k+A_k|}{|A_k|} >  2^{n-1}$. 
Now suppose that $S$ is a GAP of dimension $n$ with $|S| = 2^n$, that is, \[ 
S=a +d_1[0,1]+ \dots + d_n[0,1]. 
\] The sumset $S+S$ is of the form: \[S+S = 2a + d_1[0,2]+ \dots + d_n[0,2], \] so plainly $|S+S| \leq 3^n$. Then for $n\in \{3,4,5\}$ we have: \[\frac{|S+S|}{|S|} \ \leq  \ \left( \frac{3}{2} \right)^n  < \ 2^{n-1}.\] Thus, if $k>2$ and $t<64$, $A_k$ cannot be a GAP unless $t = 2$ or $4$.

Finally, we examine the case with $k>2$ and $t\geq 64$. Consider the projective Fermat curve defined by $x^k+y^k = c z^k$ with $c\neq 0$, which has  genus $g = \frac{1}{2}(k-1)(k-2)$. The Hasse-Weil estimate \cite{da} for the number $N(c)$ of projective points over $\Zp$ on such a curve is $|N(c)-(p+1)| \le 2g \sqrt{p}$, which gives $N(c) \leq (k-1)(k-2)\sqrt{p} +p +1$. Then, because the map $(x,y,1)\mapsto (x^k,y^k)$ from points on the curve with $xyz \neq 0$ to solutions of $u+v = c$ with $(u,v) \in A_k \times A_k$ is $k^2$-to-$1$, we have that for all nonzero $c$,
\begin{equation}\label{HV}
r(c)  \le  \frac {N(c)}{k^2} \le \frac {p+1}{k^2} +\frac {(k-1)(k-2)}{k^2} \, \sqrt{p}.
\end{equation}

For small subgroups, we use the following refinement obtained by Mattarei \cite{ma}.

\begin{lemma} [\cite{ma}; Corollary] If $k \ge 4$ and $t \le \frac 14 k^3$, then for any nonzero $c$  we have $r(c) \le 3\cdot 2^{-\frac 23} t^{\frac 23}.$

\end{lemma}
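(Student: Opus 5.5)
The plan is to prove this bound directly by Stepanov's auxiliary-polynomial method, which is how such estimates are classically obtained and which I expect Mattarei's argument follows.

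\textbf{Reformulation.} Write $r(c)=\#\{x\in A_k : c-x\in A_k\}$. This is the number of common roots of $X^t-1$ and $(c-X)^t-1$, i.e.\ the number of $x\in A_k$ with $(c-x)^t=1$. Denote this set by $\mathcal{X}$.

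\textbf{Auxiliary polynomial.} I would build
\[
\Phi(X)=\sum_{0\le i<I}\ \sum_{0\le j<J} a_{ij}\,X^{i}(c-X)^{tj}, \qquad a_{ij}\in\Zp,
\]
not identically zero, vanishing to order at least $D$ at every $x\in\mathcal X$. For the derivatives, note that each derivative of $X^{tj}$ or $(c-X)^{tj}$ has the form (polynomial of low degree)$\times X^{t\cdot}$ or $(c-X)^{t\cdot}$. On $\mathcal X$ both $x^t=1$ and $(c-x)^t=1$. Hence the condition $\Phi^{(m)}(x)=0$ for all $x\in\mathcal X$ becomes the vanishing of a single polynomial of degree about $I+m$. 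One can arrange this by working with $X^{m}(c-X)^{m}\Phi^{(m)}$, which reduces modulo those relations to degree less than $I+2m$. Requiring these reduced polynomials to vanish identically for $0\le m<D$ gives roughly $D(I+D)$ linear conditions on $IJ$ unknowns. So a nonzero solution exists once $IJ> D(I+D)$ (up to lower-order terms).

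\textbf{Nonvanishing and counting.} Next I must show that $\Phi\not\equiv 0$ when not all $a_{ij}$ vanish. This amounts to the linear independence over $\Zp[X]$, in degrees below $I$, of the polynomials $(c-X)^{tj}$ for $j<J$. The standard route is a Wronskian or degree argument, which needs $I\le t$ and $tJ$ well below $p$. This is where the hypotheses $k\ge4$ and $t\le \tfrac14k^3$ enter. Since $p-1=kt\ge(4t)^{1/3}t$, we get $t\ll p^{3/4}$, leaving room for $tJ<p$ and for $D<p$, so that the derivative conditions genuinely encode vanishing multiplicity. Given this, $D\,|\mathcal X|\le \deg\Phi< I+tJ$, so $r(c)\le (I+tJ)/D$.

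\textbf{Optimization.} Finally I would take $J\approx D^2/t^{1/3}$-type choices, balancing $I\approx tJ$ subject to $IJ\gtrsim D(I+D)$. The natural scaling $D\asymp t^{2/3}$, $J\asymp t^{1/3}$, $I\asymp t$ gives $r(c)\ll t^{2/3}$. Tracking the constants should yield $3\cdot2^{-2/3}t^{2/3}$.

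\textbf{Main obstacle.} The hard step is the nonvanishing of $\Phi$, i.e.\ the independence lemma for the polynomials $(c-X)^{tj}$. Doing this with the sharp parameter range, so that both the constant $3\cdot2^{-2/3}$ and the exact threshold $t\le\frac14k^3$ come out, is where the care lies. I would first try to prove independence by comparing the lowest-order terms in the expansion around $X=c$ under the constraint $I<t$. Failing that, I would use the Wronskian criterion in characteristic $p$, valid since all degrees are below $p$.
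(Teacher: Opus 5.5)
\textbf{There is a genuine gap: as set up, your sketch cannot beat the trivial bound $r(c)\le t$.} The paper gives no proof of this lemma (it is quoted from Mattarei), so your argument has to stand on its own.

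\textbf{Why the ansatz fails.} Your $\Phi=\sum_{i<I,\,j<J}a_{ij}X^i(c-X)^{tj}$ contains no factor $X^{t\cdot}$. So the only relation your reduction ever uses is $(c-x)^t=1$. Concretely, $(c-X)^m\Phi^{(m)}=\sum_{i,j}a_{ij}(c-X)^{tj}P_{ijm}(X)$ with $\deg P_{ijm}\le i<I\le t$. Requiring the reduced polynomial $R_m=\sum_{i,j}a_{ij}P_{ijm}$ to vanish identically is therefore equivalent to requiring $\Phi^{(m)}(x)=0$ at every $x$ with $(c-x)^t=1$, that is, on the whole translate $c-A_k$, a set of $t$ points. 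This holds with or without your extra factor $X^m$, and with or without reducing modulo $X^t-1$.

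Since $D<p$, any nonzero $\Phi$ your linear algebra produces then has $\deg\Phi\ge Dt$. So $D\,r(c)\le\deg\Phi$ never yields anything below $t$, and the set $\mathcal X$ is never really used.

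Your own bookkeeping shows the same thing. The solvability condition $IJ>D(I+D)$ forces $J>D$, hence $(I+tJ)/D>t$. Your proposed scaling $I\asymp t$, $J\asymp t^{1/3}$, $D\asymp t^{2/3}$ has about $t^{4/3}$ unknowns against $t^{5/3}$ equations, so it violates that condition.

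Relatedly, the step you flag as the main obstacle is trivial here. With $Y=c-X$ and $I\le t$, the terms $\lambda_j(Y)Y^{tj}$ occupy disjoint degree windows $[tj,tj+t-1]$. Independence that comes for free is the symptom that there is no gain.

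\textbf{The missing idea is a second multiplicative family.} Your derivative paragraph mentions powers $X^{t\cdot}$, but they must actually appear in $\Phi$. The Stepanov argument behind $t^{2/3}$ bounds, in the style of Heath-Brown--Konyagin, uses
\[
\Phi(X)=\sum_{a<A}\sum_{b<B}\sum_{e<E}\lambda_{abe}\,X^{a}X^{tb}(c-X)^{te}.
\]
Here $X^m(c-X)^m\Phi^{(m)}$ reduces on $\mathcal X$ to degree $<A+m$, using both $x^t=1$ and $(c-x)^t=1$. That gives about $AD+D^2/2$ conditions. Meanwhile the number of unknowns $ABE$ is quadratic in the exponent ranges, while $\deg\Phi<A+t(B+E)$ is only linear in them. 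That mismatch is the whole source of the saving.

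Nonvanishing of $\Phi$ now becomes a genuine independence lemma for the $X^{tb}(c-X)^{te}$ over polynomials of degree $<A$. It fails for $A\asymp t$: the family lies in a space of dimension $A+t(B+E-2)$, so one needs $ABE\le A+t(B+E-2)$. That is, $A\lesssim t^{2/3}$ when $B,E\asymp t^{1/3}$.

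With $D\asymp t^{2/3}$ and that lemma in hand, this gives $r(c)=O(t^{2/3})$. The characteristic-$p$ requirement that exponents like $t(B+E)$ stay below $p$ is where $t\ll p^{3/4}$, i.e.\ $t\lesssim k^3$, enters.

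\textbf{The constant is not established.} Even with the fix, nothing in your sketch pins down the constant $3\cdot 2^{-2/3}$ or the exact threshold $t\le\frac14k^3$. Those are precisely what the cited result provides. So ``tracking the constants should yield'' them is an assertion rather than a proof.
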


\begin{lemma} If $k>2$ and $t=2^{n}$ with $n \ge 6$, then $A_k$ cannot be expressed as a sum of sets each of cardinality $2$. That is, if $A_k=S_1+\cdots +S_n$  for some subsets $S_i$ of $\mathbb Z_p$, then $|S_i| \ge 3$ for some $i$. 
\end{lemma}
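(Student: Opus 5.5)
Suppose for contradiction that $A_k=S_1+\cdots+S_n$ with every $|S_i|=2$, where $k>2$ and $t=2^n$, $n\ge 6$. By Corollary 4.1 the decomposition is a direct sum, and Lemma 5.2 then gives some $c\neq 0$ with $r(c)\ge t/2 = 2^{n-1}$. The plan is to contradict this with an upper bound on $r(c)$ valid for every nonzero $c$. I will use Lemma 5.4 (Mattarei) when $t$ is small relative to $k$, and the Hasse--Weil bound \eqref{HV} otherwise, splitting according to whether $t\le \frac14 k^3$.

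\emph{Case 1: $t\le \frac14 k^3$.} Since $k>2$, and we may take $k\ge 4$ (see below), Lemma 5.4 gives $r(c)\le 3\cdot 2^{-2/3}t^{2/3}$. The inequality $3\cdot 2^{-2/3} t^{2/3} < t/2$ is equivalent to $t^{1/3} > 6\cdot 2^{-2/3}$, that is, $t > 36/4 = 9$. This holds since $t\ge 64$.

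The leftover value $k=3$ needs a separate look. If $k=3$ then $t=(p-1)/3$, so $t\le \frac14 k^3$ fails for $t\ge 64$, and $k=3$ falls into Case 2 anyway.

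\emph{Case 2: $t>\frac14 k^3$.} Here $k<(4t)^{1/3}$. Using $p+1=kt+2$, \eqref{HV} gives
\[
r(c)\le \frac{t}{k}+\frac{2}{k^2}+\sqrt{p}< \frac{t}{3}+\frac{2}{9}+\sqrt{kt+1}.
\]
So it suffices to show
\[
\frac t3+\frac29+\sqrt{kt+1}<\frac t2,
\quad\text{i.e.}\quad
\sqrt{kt+1}<\frac t6-\frac29 .
\]
With $k<(4t)^{1/3}$ the left side is roughly $4^{1/6}t^{2/3}$, while the right side is $t/6$. Comparing these requires $t^{1/3}\gtrsim 6\cdot 4^{1/6}\approx 7.6$, i.e.\ $t\gtrsim 435$. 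Thus for $t=2^n$ with $n\ge 9$ the inequality holds after a routine check.

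The main obstacle is the range $n\in\{6,7,8\}$ in Case 2, where the crude bound is insufficient. There I would keep the sharper form $r(c)\le \frac{t}{k}+\frac{(k-1)(k-2)}{k^2}\sqrt{kt+2}$ and note that $k$ is tightly restricted: $3\le k<(4t)^{1/3}$, so $k\le 6$ for $t=64$, $k\le 7$ for $t=128$, and $k\le 10$ for $t=256$. For each such pair $(k,t)$, the target $\frac tk+\frac{(k-1)(k-2)}{k^2}\sqrt{kt+2}<\frac t2$ can be verified directly. For example, $k=3$, $t=64$ gives $21.3+\frac29\sqrt{194}\approx 24.4<32$, and larger $k$ only improves the first term while the second stays below $\sqrt{kt}$, which is small compared with $t/2$ here. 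Pairs where $kt+1$ is not prime can be discarded, but the inequality should hold regardless.

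Having $r(c)<t/2$ for all $c\ne0$ in every case contradicts Lemma 5.2, so some $|S_i|\ge 3$.
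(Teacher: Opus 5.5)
Your proof is correct in substance and follows the paper's plan. Lemma 5.2 gives $r(c)\ge t/2$, and you split at $t\le\frac14k^3$, using Lemma 5.4 in one case and \eqref{HV} in the other. Two corrections are needed, and your Case 2 is carried out differently and less efficiently than the paper's.

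\textbf{Case 1 arithmetic.} The inequality $3\cdot 2^{-2/3}t^{2/3}<\frac t2$ is equivalent to $t>6^3/2^2=54$, not $t>36/4=9$; you squared instead of cubing. The conclusion survives only because $t\ge 64>54$. This thin margin is exactly why the lemma assumes $n\ge 6$.

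\textbf{Case 2 route.} You replace $t/k$ by $t/3$ while bounding $k<(4t)^{1/3}$. Among powers of $2$ this works only for $t\ge 512$, so you are left with a table of cases $t\in\{64,128,256\}$, $3\le k\le 10$. The paper instead keeps $t/k$ and moves it to the left side:
\[
t\left(\tfrac12-\tfrac1k\right)\le \tfrac2{k^2}+\left(1-\tfrac1k\right)\left(1-\tfrac2k\right)\sqrt{kt+1}.
\]
This gives $t<2\sqrt{kt}$, i.e.\ $t<4k$. Combined with $t>\frac14k^3$ it forces $k=3$, and then \eqref{HV} gives $t\le 8$, so no case check is needed.

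\textbf{Finishing your route.} If you keep your approach, the finite check does succeed, but two things must change. First, use the bound \eqref{HV} actually gives, $\frac tk+\frac2{k^2}+\frac{(k-1)(k-2)}{k^2}\sqrt{kt+1}$. Your ``sharper form'' drops the $\frac2{k^2}$ term, so as written it is not justified, though the slack makes this harmless. Second, you cannot rely on the heuristic that the second term stays below $\sqrt{kt}$. At $(k,t)=(4,64)$ that gives $16+16=32=t/2$, which is not a strict inequality. The true coefficient $\frac38$ gives about $22.1<32$.
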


\begin{proof}
Suppose that $A_k=S_1+\cdots +S_n$ for subsets $S_i$ with $|S_i| = 2$.
If $t>\frac 14 k^3$ we obtain from Lemma 5.2 and \eqref{HV} that for some $c\in \Zp^*$,
\begin{equation}\label{lemHV}
\frac t2 \le r(c) \le \frac tk+\frac 2{k^2} +\left(1-\frac 1k\right)\left(1-\frac 2k\right) \sqrt{kt+1},
\end{equation}
from which we get
\[
t\left(\frac 12-\frac 1k\right) \le \frac 2{k^2} + \left(1-\frac 1k\right)\left(1-\frac 2k\right) \sqrt{kt+1},
\]
that is, 
\[
t \le \frac 4{k(k-2)} +2\left(1-\frac 1k\right)\sqrt{kt+1}.
\]
For $k \ge 3$ and $t \ge 2^6$, the latter quantity is less than $2\sqrt{kt}$. 
Thus  $t<2\sqrt{kt}$, and so $t<4k$. Combining this with the assumption $t>\frac 14 k^3$, we get $\frac 14k^3<4k$, implying that $k<4$.  For $k=3$, \eqref{lemHV} implies $t \le 8$ contrary to assumption.

Suppose now that  $t \le \frac 14 k^3$.   In particular, $k \ge 4^{\frac 13}t^{\frac 13} \ge 4^{\frac 13} 64^{\frac 13}>4$. It follows from  Lemmas 5.2 and 5.4 that for some $c \neq 0$, we have $\frac 12 t \le r(c) \leq 3\cdot 2^{-\frac 23} t^{\frac 23}$ and so $t \le 54$ contrary to assumption.
\end{proof}
\noindent Thus, no subgroup with $k>2$ and $t \geq 64$ can be a GAP, completing the proof of Theorem 1.1.

\end{document}